\newtheorem{thm}{Theorem}
\newtheorem{lem}{Lemma}
\theoremstyle{definition}
\theoremstyle{remark}
\begin{document}

\title[A Curious congruence involving alternating harmonic sums]
{A Curious congruence involving alternating harmonic sums}

\author{ LIUQUAN WANG}

\address{Department of Mathematics, National University of Singapore, Singapore, 119076, Singapore}

\email{mathlqwang@163.com}

\date{June 2, 2014}
\subjclass[2010]{Primary 11A07, 11A41.}

\keywords{congruences, Bernoulli numbers, harmonic sums}


\dedicatory{}


\begin{abstract}
Let $p$ be a prime and ${\mathcal{P}_{p}}$  the set of positive integers which are prime to $p$. We establish the following interesting congruence
\[\sum\limits_{\begin{smallmatrix}
 i+j+k={{p}^{r}} \\
 i,j,k\in {\mathcal{P}_{p}}
\end{smallmatrix}}{\frac{{{(-1)}^{i}}}{ijk}}\equiv \frac{{{p}^{r-1}}}{2}{{B}_{p-3}}\, (\bmod \, {{p}^{r}}).\]
\end{abstract}

\maketitle

\section{Introduction and Main Results}

Let ${{B}_{n}}$ be the $n$-th Bernoulli number, which is defined by
\[\frac{x}{{{e}^{x}}-1}=\sum\limits_{n=0}^{\infty }{\frac{{{B}_{n}}}{n}{{x}^{n}}}.\]
In  \cite{Zhao}, Zhao found a curious congruence
\begin{equation}\label{zhao1}
\sum\limits_{\begin{smallmatrix}
 i+j+k=p \\
 i,j,k>0
\end{smallmatrix}}^{{}}{\frac{1}{ijk}\equiv -2{{B}_{p-3}}\,(\bmod \,p),}
\end{equation}
where $p\ge 3$ is a prime.

Let ${\mathcal{P}_{n}}$ denotes the set of positive integers which are prime to  $n$.  Recently, Wang and Cai \cite{Wang} gave a generalization to this congruence, they have shown
\[\sum\limits_{\begin{smallmatrix}
 i+j+k={{p}^{r }} \\
 i,j,k\in {\mathcal{P}_{p}}
\end{smallmatrix}}^{{}}{\frac{1}{ijk}\equiv -2{{p}^{r-1}}{{B}_{p-3}}\,(\bmod \,p^r)}.\]
For more variant and generalizations of (\ref{zhao1}), we refer the reader to see \cite{Xia} and \cite{Zhou}.

In this paper, we consider the alternating sums and obtain the following analogous result.
\begin{thm}\label{thm1}
 Let $p\ge 3$ be a prime and $r$  a positive integer, then
\begin{equation}
\sum\limits_{\begin{smallmatrix}
 i+j+k={{p}^{r}} \\
 i,j,k\in {\mathcal{P}_{p}}
\end{smallmatrix}}{\frac{{{(-1)}^{i}}}{ijk}}\equiv \frac{{{p}^{r-1}}}{2}{{B}_{p-3}}\, (\bmod \, {{p}^{r}}).
\end{equation}
\end{thm}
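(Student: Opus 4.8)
The plan is to reduce the alternating sum to known (non-alternating) harmonic-sum congruences by splitting according to the parity contributed by the factor $(-1)^i$. Write $S=\sum \frac{(-1)^i}{ijk}$ over the admissible triples. Since $(-1)^i = 1-2\cdot[i\text{ odd}]$, I would first express
\[
S=\sum_{\substack{i+j+k=p^r\\ i,j,k\in\mathcal{P}_p}}\frac{1}{ijk}-2\sum_{\substack{i+j+k=p^r\\ i,j,k\in\mathcal{P}_p\\ i\text{ odd}}}\frac{1}{ijk}.
\]
The first sum is exactly the Wang--Cai sum, congruent to $-2p^{r-1}B_{p-3}\pmod{p^r}$, so the whole problem collapses to evaluating the ``$i$ odd'' sum modulo $p^r$. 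Because the target answer is $\tfrac{p^{r-1}}{2}B_{p-3}$, I expect the odd-$i$ sum to be congruent to $-\tfrac{5}{4}p^{r-1}B_{p-3}$, and confirming this constant is where the real work lies.

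To handle the restricted sum I would reintroduce symmetry. Note that $i+j+k=p^r$ is odd, so among $i,j,k$ either exactly one is odd or all three are odd; in either case the number of odd coordinates is odd, which lets me relate the ``exactly $i$ odd'' count to the fully symmetric sum. Concretely, I would use inclusion–exclusion on the parities: summing $[i\text{ odd}]+[j\text{ odd}]+[k\text{ odd}]$ over each triple counts the odd coordinates, and since that count is always $1$ or $3$ one obtains clean linear relations among the three singly-restricted sums and the triply-restricted sum. By the symmetry of $\frac{1}{ijk}$ under permutation, the three singly-odd sums are congruent, so I can solve for $\sum_{i\text{ odd}}$ in terms of the total sum and the all-odd sum $T=\sum_{i,j,k\text{ odd}}\frac{1}{ijk}$.

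This shifts the burden onto computing $T$ modulo $p^r$. Here I would substitute $i=2a$-type reparametrizations carefully: writing each odd variable and collecting, the all-odd sum should reduce, via the substitution replacing odd residues by a scaled copy, to a multiple of the unrestricted sum $\sum\frac{1}{ijk}$ plus lower-order harmonic corrections. The cleanest route is probably to expand $\frac{1}{ijk}$ using the constraint $k=p^r-i-j$, perform a Taylor/$p$-adic expansion of $\frac{1}{k}=\frac{-1}{i+j}\cdot\frac{1}{1-p^r/(i+j)}$ modulo $p^r$, and thereby convert the double sums into weighted power sums $\sum_{a\in\mathcal{P}_p} a^{-m}$ over a range, whose values mod $p^r$ are governed by Bernoulli numbers through Kummer-type congruences and the Euler--Maclaurin/Faulhaber relations already implicit in the Wang--Cai computation.

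**The main obstacle** I anticipate is controlling the parity restriction at the full $p^r$ modulus rather than just mod $p$: the alternating factor interacts with the $2$-adic structure, and establishing the precise rational constant $-\tfrac54$ (equivalently the final coefficient $\tfrac12$) requires tracking the contribution of the all-odd sum $T$ exactly, including the factor of $2$ coming from the dyadic substitution, without dropping terms that vanish mod $p$ but survive mod $p^r$. I would therefore prove an auxiliary lemma giving $T\equiv c\,p^{r-1}B_{p-3}\pmod{p^r}$ for an explicit $c$, treating the prime $2$ (which is invertible since $p\ge 3$) with care, and only then assemble $S$ from the linear parity relations.
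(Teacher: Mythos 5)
Your reduction is sound as far as it goes: writing $(-1)^i=1-2[i\ \text{odd}]$, invoking the Wang--Cai congruence for the unrestricted sum, and using the parity count (each triple summing to the odd number $p^r$ has exactly one or exactly three odd coordinates, whence $3A=\Sigma+2T$ with $A$ the odd-$i$ sum, $\Sigma$ the full sum, $T$ the all-odd sum, so $S=\frac{1}{3}(\Sigma-4T)$) is correct bookkeeping. But this only converts the theorem into the equivalent, unproved claim $T\equiv-\frac{7}{8}p^{r-1}B_{p-3}\pmod{p^r}$ (equivalently $A\equiv-\frac{5}{4}p^{r-1}B_{p-3}$), and those constants are obtained by working backwards from the answer, as you yourself acknowledge. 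The sketch you give for establishing them does not close the gap: modulo $p^r$ one has $\frac{1}{p^r-(i+j)}\equiv\frac{-1}{i+j}$ exactly, so no Taylor expansion is needed there; the real difficulty is that the parity restriction destroys the $j\mapsto p^r-j$ symmetry that makes the unrestricted sums tractable. Concretely, parametrizing the complementary ``one odd, two even'' triples by $j=2j'$, $k=2k'$ turns the computation of $T$ into half-range sums such as $\sum_{j'+k'\le (p^r-1)/2}\frac{1}{j'k'(j'+k')}$ with coprimality conditions, and evaluating these modulo $p^r$ (not merely modulo $p$) is a problem of essentially the same depth as the original theorem; nothing you cite supplies it.

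For comparison, the paper never splits by parity. It substitutes $i=p^r-j-k$, which converts $(-1)^i$ into $-(-1)^{j+k}$ and, after partial fractions, yields $2\sum_{j<m}\frac{(-1)^m}{jm^2}$; a symmetrization together with $\sum_{j<p^r,\,j\in\mathcal{P}_p}1/j\equiv 0\pmod{p^r}$ reduces everything to the single diagonal sum $\sum_{j\equiv m\,(\bmod\,p)}\frac{(-1)^m}{jm^2}$, which is evaluated by the elementary lemma $S(x,p^r)\equiv p^{r-1}x^{-1}\pmod{p^r}$ together with the mod-$p$ value of $\sum_{x=1}^{p-1}(-1)^x x^{-3}$. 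The alternating sign is carried through intact rather than split off. To salvage your route you would need to prove the auxiliary congruence for $T$ by a comparable diagonal/symmetrization argument, at which point you will have done the paper's work anyway.
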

Furthermore, we have
\begin{thm}\label{thm2}
Let $n$ be a positive integer and $p\ge 3$  a prime, if ${{p}^{r}}|n$,  $r\ge 1$ is an integer, then
\[\sum\limits_{\begin{smallmatrix}
 i+j+k=n \\
 i,j,k\in {\mathcal{P}_{p}}
\end{smallmatrix}}^{{}}{\frac{{(-1)}^{i}}{ijk}\equiv \frac{n}{2p}{{B}_{p-3}}\,(\bmod \, p^r).}\]
\end{thm}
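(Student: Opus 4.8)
The plan is to re-run the computation that underlies Theorem~\ref{thm1}, keeping the modulus $n$ as a free parameter subject to $p^r\mid n$; the case $n=p^r$ is exactly Theorem~\ref{thm1}, so the real task is to exhibit the dependence on $n$ and to verify that it is the single linear term $\frac{n}{2p}B_{p-3}$. (The Bernoulli congruences invoked below presuppose $p\ge5$; the case $p=3$ is an easy separate verification.)

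First I would symmetrize. The summation region $\{(i,j,k):i+j+k=n,\ i,j,k\in\mathcal{P}_p\}$ is invariant under all permutations of $(i,j,k)$, so relabelling the bound variables shows that placing the sign $(-1)^{(\cdot)}$ on any one of the three variables yields the same value; writing $S(n)$ for the sum in the theorem, this gives $3S(n)=\sum\big((-1)^i+(-1)^j+(-1)^k\big)/(ijk)$ over the region. Combining this with the elementary identity $\frac{1}{ijk}=\frac1n\big(\frac1{ij}+\frac1{jk}+\frac1{ki}\big)$, valid because $i+j+k=n$, and collecting the nine resulting terms according to whether the sign sits on a variable appearing in the denominator or on the free one, I obtain
\[ nS(n)=2U+V,\qquad U=\sum_{\substack{i+j+k=n\\ i,j,k\in\mathcal{P}_p}}\frac{(-1)^i}{ij},\qquad V=(-1)^n\sum_{\substack{i+j+k=n\\ i,j,k\in\mathcal{P}_p}}\frac{(-1)^{i+j}}{ij}, \]
where in $V$ I used $(-1)^k=(-1)^{n-i-j}=(-1)^n(-1)^{i+j}$. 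Because $p^r\mid n$, the condition $k=n-i-j\in\mathcal{P}_p$ is simply $p\nmid(i+j)$, so $U$ and $V$ are genuine double sums over $i,j\in\mathcal{P}_p$ with $i+j\le n-1$ and $p\nmid(i+j)$.

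Next I would evaluate $U$ and $V$. The coprimality condition $p\nmid(i+j)$ is removed by inclusion–exclusion: subtract from the unrestricted double sum the contribution of the pairs with $p\mid(i+j)$, the latter being an inner sum over an arithmetic progression that can itself be expanded. The triangular range $i+j\le n-1$ is handled by iterating, summing $j$ from $1$ to $n-1-i$ inside a sum over $i$, which expresses $U$ and $V$ through single (alternating) harmonic power sums $\sum (\pm1)^i/i$ and $\sum 1/i^2$ taken over initial ranges that are multiples of $p$. At this point the input is a package of Wolstenholme–Glaisher type congruences and their alternating analogues, e.g.\ the evaluations of $\sum_{p\nmid i}1/i^2$ and $\sum_{p\nmid i}(-1)^i/i$ modulo suitable powers of $p$, which are precisely the congruences through which $B_{p-3}$ enters. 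Assembling these in $2U+V=nS(n)$ should collapse everything to the claimed linear term.

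The main obstacle is the bookkeeping of $p$-adic precision. Since the identity produces $nS(n)$ rather than $S(n)$, and $v_p(n)\ge r$, one must evaluate $2U+V$ modulo $p^{\,r+v_p(n)}$ in order to divide by $n$ and retain a statement modulo $p^r$; this pushes the auxiliary power-sum congruences to higher $p$-adic order than the bare Theorem~\ref{thm1} and makes the vanishing of all the inclusion–exclusion remainder terms the delicate point. The hypothesis $p^r\mid n$ is used twice and essentially: to turn $k\in\mathcal{P}_p$ into $p\nmid(i+j)$, and to guarantee that the contributions nonlinear in $n$ fall below the required precision, leaving only $\frac{n}{2p}B_{p-3}$. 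As a consistency check, setting $n=p^r$ must reproduce $\frac{p^{r-1}}{2}B_{p-3}$, i.e.\ Theorem~\ref{thm1}; one may alternatively organize the argument so that Theorem~\ref{thm1} is invoked as the base case $n=p^r$ and only the $n$-dependence is computed afresh.
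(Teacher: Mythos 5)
Your proposal does not reach a proof: the step that carries all the content --- evaluating $2U+V$ to the required $p$-adic precision --- is only described, not performed, and the difficulty you yourself flag there is exactly the one that sinks this route. Since your identity produces $nS(n)$ rather than $S(n)$, dividing by $n$ forces you to know $2U+V$ modulo $p^{r+v_p(n)}$, which is at least $p^{2r}$. Moreover $U$ and $V$ are not products of single sums (the inner range $j\le n-1-i$ and the condition $p\nmid (i+j)$ both depend on $i$), so they are genuine alternating double harmonic sums, and evaluating those modulo $p^{2r}$ requires congruences of much higher order than Lemma~\ref{px3} (which holds only modulo $p$) or Lemma~\ref{Sxp} (modulo $p^r$); one would need expansions involving further Bernoulli numbers together with a separate argument that all the inclusion--exclusion remainders vanish to order $p^{2r}$. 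None of this is supplied, so the ``assembling'' step is a genuine gap rather than bookkeeping.

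The paper avoids this escalation entirely by reducing Theorem~\ref{thm2} to Theorem~\ref{thm1} instead of redoing the computation: write $n=mp^r$ and decompose each variable as $i=xp^r+i_0$ with $1\le i_0\le p^r-1$, $i_0\in\mathcal{P}_p$. Modulo $p^r$ the summand depends only on $(i_0,j_0,k_0)$ up to the sign $(-1)^x$ (since $p^r$ is odd), the constraint splits into $i_0+j_0+k_0=p^r$ with $x+y+z=m-1$ or $i_0+j_0+k_0=2p^r$ with $x+y+z=m-2$, the sign counts $\sum(-1)^x$ over these compositions equal $\big[\frac{m+1}{2}\big]$ and $\big[\frac{m}{2}\big]$, and the reflection $(i_0,j_0,k_0)\mapsto(p^r-i_0,p^r-j_0,p^r-k_0)$ identifies the second block with the first. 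Everything then multiplies the known sum of Theorem~\ref{thm1} by $\big[\frac{m+1}{2}\big]+\big[\frac{m}{2}\big]=m$, with no new congruences and no loss of precision. To rescue your approach you would have to either prove the needed mod-$p^{2r}$ evaluations of the alternating double sums or find an identity for $S(n)$ itself that does not require dividing by $n$.
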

In particular, if $n=p^{r}$, Theorem \ref{thm2} becomes Theorem \ref{thm1}.

\section{Preliminaries}

\begin{lem}[cf. Theorem 5.2 in \cite{Sun}]\label{px3}
For any prime $p \ge 3$ we have
\[\sum\limits_{x=1}^{(p-1)/2}{\frac{1}{{{x}^{3}}}}\equiv -2{{B}_{p-3}} \, (\bmod \, p).\]
\end{lem}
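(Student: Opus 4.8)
The plan is to convert the sum of inverse cubes into an ordinary power sum via Fermat's little theorem and then evaluate that power sum through Bernoulli polynomials. First, since $p \nmid x$ for $1 \le x \le (p-1)/2$, Fermat's little theorem gives $x^{-1} \equiv x^{p-2} \pmod p$ and hence $x^{-3} \equiv x^{p-4} \pmod p$. Thus it suffices to prove $\sum_{x=1}^{(p-1)/2} x^{p-4} \equiv -2B_{p-3} \pmod p$. I would dispose of $p=3$ by direct computation (there the sum is $1 \equiv -2B_0 \pmod 3$) and assume $p \ge 5$ henceforth, so that $p-4 \ge 1$ and $p-3 \ge 2$.

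Next I would invoke the classical Faulhaber formula $\sum_{x=1}^{N} x^m = \frac{1}{m+1}\bigl(B_{m+1}(N+1) - B_{m+1}\bigr)$, where $B_n(x)$ denotes the $n$-th Bernoulli polynomial. Taking $N = (p-1)/2$ and $m = p-4$ yields $\sum_{x=1}^{(p-1)/2} x^{p-4} = \frac{1}{p-3}\bigl(B_{p-3}(\tfrac{p+1}{2}) - B_{p-3}\bigr)$. The other structural input is the special value $B_n(1/2) = (2^{1-n}-1)B_n$, which I would use in the form $B_{p-3}(1/2) = (2^{4-p}-1)B_{p-3}$.

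The crux is to pass from $B_{p-3}(\tfrac{p+1}{2})$ to $B_{p-3}(1/2)$ modulo $p$. Writing $B_{p-3}(x) = \sum_{k=0}^{p-3}\binom{p-3}{k}B_k x^{p-3-k}$, I would observe that every coefficient is $p$-integral: by the von Staudt--Clausen theorem $p$ divides the denominator of $B_k$ only when $(p-1) \mid k$, which fails for $1 \le k \le p-3$. Since $\tfrac{p+1}{2} - \tfrac12 = \tfrac p2$ is $p$ times a $p$-integral number, a $p$-integral polynomial takes congruent values at these two arguments, so $B_{p-3}(\tfrac{p+1}{2}) \equiv B_{p-3}(1/2) \pmod p$. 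This $p$-integrality bookkeeping, namely checking that no factor secretly carries a $p$ in its denominator, is the step I expect to need the most care.

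Finally I would assemble the pieces modulo $p$. Combining the above gives $\sum_{x=1}^{(p-1)/2} x^{p-4} \equiv \frac{B_{p-3}}{p-3}\bigl(2^{4-p}-2\bigr) \pmod p$. Using $p-3 \equiv -3$ together with $2^{p-1} \equiv 1$, so that $2^{4-p} \equiv 8 \pmod p$, the parenthetical factor is $\equiv 6$ while $\frac{1}{p-3} \equiv -\frac13$, whence the right-hand side collapses to $-\frac13 \cdot 6 \cdot B_{p-3} = -2B_{p-3} \pmod p$, completing the proof. Beyond the integrality check flagged above, I anticipate only routine modular arithmetic here.
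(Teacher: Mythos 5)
Your argument is correct, but note that the paper does not actually prove this lemma: it is quoted verbatim from Theorem 5.2 of Sun's paper \cite{Sun}, so there is no internal proof to compare against. What you have supplied is a self-contained derivation, and every step checks out. The reduction $x^{-3}\equiv x^{p-4}\pmod p$ is valid for $p\ge 5$ and you correctly isolate $p=3$ (where the sum is $1\equiv -2B_0\pmod 3$). Faulhaber's formula in the form $\sum_{x=1}^{N}x^{m}=\frac{1}{m+1}\bigl(B_{m+1}(N+1)-B_{m+1}\bigr)$ is legitimate here because $m+1=p-3\ge 2$, so the ambiguity at $B_1$ never enters. The one step that genuinely needs care --- passing from $B_{p-3}\bigl(\tfrac{p+1}{2}\bigr)$ to $B_{p-3}\bigl(\tfrac12\bigr)$ --- you handle properly: von Staudt--Clausen guarantees that $\binom{p-3}{k}B_k$ is $p$-integral for $0\le k\le p-3$ since $(p-1)\nmid k$ in that range (and $B_0=1$), so the polynomial $B_{p-3}(x)$ has $p$-integral coefficients and takes congruent values at the $p$-integral arguments $\tfrac{p+1}{2}$ and $\tfrac12$, which differ by $\tfrac p2\in p\mathbb{Z}_{(p)}$. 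The final arithmetic, $2^{4-p}\equiv 8$, $\frac{1}{p-3}\equiv-\frac13$, giving $-\frac13\cdot 6\cdot B_{p-3}=-2B_{p-3}$, is right (and one can sanity-check it at $p=5$ and $p=7$). This is a clean alternative to chasing down Sun's Theorem 5.2, at the cost of importing Faulhaber's formula, the value $B_n(1/2)=(2^{1-n}-1)B_n$, and von Staudt--Clausen; Sun's route instead derives such half-range power-sum congruences directly from properties of Bernoulli polynomials, so the two approaches ultimately rest on the same toolkit.
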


The following Lemma is contained in \cite{Wang}, for the sake of completeness, we provide the proof here.
\begin{lem}\label{Sxp}
Let $p\ge 3$ be a prime and $r$ a positive integer, for $1\le x\le p-1$, define
\[S(x,{{p}^{r}})=\sum\limits_{\begin{smallmatrix}
 i\equiv x\,(\bmod p) \\
 1\le i\le {{p}^{r}}-1
\end{smallmatrix}}^{{}}{\frac{1}{i}},\]
we have
\[{S(x,{{p}^{r}})\equiv p^{r-1}x^{-1} \,(\bmod \,{{p}^{r}})}.\]

\end{lem}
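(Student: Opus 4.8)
The plan is to evaluate $S(x,p^r)$ by first making the index set explicit and then exploiting the fact that inversion is a bijection of the unit group $(\mathbb{Z}/p^r\mathbb{Z})^{*}$. Since $1\le x\le p-1$, the integers $i$ with $1\le i\le p^r-1$ and $i\equiv x\pmod p$ are exactly $i=x+jp$ for $j=0,1,\dots,p^{r-1}-1$, so there are precisely $p^{r-1}$ of them and they represent, without repetition, the full fiber of the reduction map $(\mathbb{Z}/p^r\mathbb{Z})^{*}\to(\mathbb{Z}/p\mathbb{Z})^{*}$ over $x\bmod p$. Working in $\mathbb{Z}/p^r\mathbb{Z}$, each such $i$ is a unit, so $S(x,p^r)\equiv\sum_{u} u^{-1}\pmod{p^r}$, the sum ranging over the units $u\equiv x\pmod p$.

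Next I would change variables by $v=u^{-1}$. Because $u\equiv x\pmod p$ forces $v\equiv x^{-1}\pmod p$, and inversion is a bijection of $(\mathbb{Z}/p^r\mathbb{Z})^{*}$ carrying one fiber onto the other (both of size $p^{r-1}$), the multiset $\{u^{-1}\}$ is exactly the fiber over $x^{-1}\bmod p$. Hence $S(x,p^r)\equiv\sum_{v} v\pmod{p^r}$, where $v$ runs over the units congruent to $x^{-1}$ modulo $p$. This converts the awkward sum of inverses into a plain sum over an arithmetic progression.

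Finally I would compute that progression sum directly. Letting $y$ be the representative of $x^{-1}$ in $\{1,\dots,p-1\}$, the relevant units are $y+jp$ for $j=0,\dots,p^{r-1}-1$, and their sum is $p^{r-1}y+\frac{p^{r}(p^{r-1}-1)}{2}$. Since $p$ is odd, $(p^{r-1}-1)/2$ is an integer, so the second term is divisible by $p^{r}$ and vanishes modulo $p^r$; as $p^{r-1}y\equiv p^{r-1}x^{-1}\pmod{p^r}$, the claim follows.

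I do not expect a serious obstacle: the only points requiring care are verifying the exact count of $p^{r-1}$ terms and checking that inversion really maps the fiber over $x$ onto the fiber over $x^{-1}$ bijectively. An alternative, more computational route expands $\frac{1}{x+jp}=x^{-1}(1+jpx^{-1})^{-1}$ as a finite $p$-adic geometric series modulo $p^r$ and sums over $j$; there the main difficulty would be showing that the power sums $\sum_{j=0}^{p^{r-1}-1}j^{m}$ have $p$-adic valuation at least $r-m$ for $1\le m\le r-1$ (delicate for $p=3$ because of the von Staudt--Clausen denominators), so the bijection argument is preferable.
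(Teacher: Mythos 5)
Your proof is correct, and it takes a genuinely different route from the paper. The paper proceeds by induction on $r$: it establishes the lifting relation $S(x,p^{r+1})\equiv pS(x,p^{r})\pmod{p^{r+1}}$ by pairing the terms of $S(x,p^{r+1})$ into $p$ blocks indexed by $b$ and observing that the discrepancy is $-p^{r}\sum_{b}\sum_{a}\frac{b}{(x+ap)\{x+(a+p^{r-1}b)p\}}$, which vanishes modulo $p^{r+1}$ because $\sum_{b=0}^{p-1}b\equiv 0\pmod p$; the base case is $S(x,p)=x^{-1}$. You instead evaluate $S(x,p^{r})$ in one shot: the index set is exactly the fiber of $(\mathbb{Z}/p^{r}\mathbb{Z})^{*}\to(\mathbb{Z}/p\mathbb{Z})^{*}$ over $x$, inversion carries that fiber bijectively onto the fiber over $x^{-1}$, and the resulting plain arithmetic-progression sum is $p^{r-1}y+\frac{p^{r}(p^{r-1}-1)}{2}\equiv p^{r-1}x^{-1}\pmod{p^{r}}$ (using that $p$ is odd and that $y\equiv x^{-1}\pmod p$ suffices because of the factor $p^{r-1}$). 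All the delicate points --- the count of $p^{r-1}$ terms, the bijectivity of inversion between fibers, and the divisibility of the second term by $p^{r}$ --- are checked. Your argument is shorter and more conceptual; the paper's inductive identity has the advantage of being exactly the kind of power-lifting relation that recurs in this circle of congruences (it is the mechanism behind the Wang--Cai generalization), so it fits the surrounding machinery, but as a standalone proof of the lemma your bijection argument is cleaner. You are also right to prefer it over the geometric-series expansion, whose error analysis is needlessly fussy.
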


\begin{proof}
For any integer  $0\le k\le {{p}^{r}}-1$, write $k$ as
\[k=a+{{p}^{r-1}}b,0\le a\le {{p}^{r-1}}-1,0\le b\le p-1.\]
Then
\begin{equation}\label{equal}
\begin{split}
  & S(x,{{p}^{r+1}})-pS(x,{{p}^{r}}) \\
 & =\sum\limits_{k=0}^{{{p}^{r}}-1}{\frac{1}{x+kp}-p\sum\limits_{k=0}^{{{p}^{r-1}}-1}{\frac{1}{x+kp}}} \\
 & =\sum\limits_{b=0}^{p-1}{\sum\limits_{a=0}^{{{p}^{r-1}}-1}{\frac{1}{x+(a+{{p}^{r-1}}b)p}}}-\sum\limits_{b=0}^{p-1}{\sum\limits_{a=0}^{{{p}^{r-1}}-1}{\frac{1}{x+ap}}} \\
 & =-{{p}^{r}}\sum\limits_{b=1}^{p-1}{\sum\limits_{a=0}^{{{p}^{r-1}}-1}{\frac{b}{(x+ap)\{x+(a+{{p}^{r-1}}b)p\}}}}.
\end{split}
\end{equation}

Because
\[\sum\limits_{b=0}^{p-1}{b}=\frac{p(p-1)}{2}\equiv 0\,\,(\bmod \,p),\]
we have $S(x,{{p}^{r+1}})-pS(x,p^{r})\equiv 0 \,(\bmod \, {{p}^{r+1}})$.

Note that $S(x,p)={{x}^{-1}}$, by induction on $r$  it's easy to see that $S(x,{{p}^{r}})\equiv {{p}^{r-1}}{{x}^{-1}} \, (\bmod \, {{p}^{r}}).$
\end{proof}


\section{Proofs of the Theorems}
\begin{proof}[Proof of Theorem 1]
It's easy to see that
\begin{equation}\label{start}
\begin{split}
 \sum\limits_{\begin{smallmatrix}
 i+j+k={{p}^{r}} \\
 i,j,k\in {\mathcal{P}_{p}}
\end{smallmatrix}}{\frac{{{(-1)}^{i}}}{ijk}} &=\sum\limits_{\begin{smallmatrix}
 i+j+k={{p}^{r}} \\
 i,j,k\in {\mathcal{P}_{p}}
\end{smallmatrix}}{\frac{{{(-1)}^{{{p}^{r}}-j-k}}}{({{p}^{r}}-j-k)jk}} \\
 & \equiv \sum\limits_{\begin{smallmatrix}
 j+k<{{p}^{r}} \\
 j,k,j+k\in {\mathcal{P}_{p}}
\end{smallmatrix}}{\frac{{{(-1)}^{j+k}}}{jk(j+k)}} \\
 & \equiv 2\sum\limits_{\begin{smallmatrix}
 1\le j<m<{{p}^{r}} \\
 j,m,m-j\in {\mathcal{P}_{p}}
\end{smallmatrix}}{\frac{{{(-1)}^{m}}}{j{{m}^{2}}}} \, (\bmod \, p^r),
\end{split}
\end{equation}
where in the last equality we replace $j+k$ by $m$ and used the fact
\[\frac{1}{jk(j+k)}=\frac{j+k}{jk{{(j+k)}^{2}}}=\big(\frac{1}{j}+\frac{1}{k}\big)\frac{1}{{{(j+k)}^{2}}}.\]
 Note that
\[\sum\limits_{\begin{smallmatrix}
 1\le j<m<{{p}^{r}} \\
 j,m,m-j\in {\mathcal{P}_{p}}
\end{smallmatrix}}{\frac{{{(-1)}^{m}}}{j{{m}^{2}}}}=\sum\limits_{\begin{smallmatrix}
 1\le j<m<{{p}^{r}} \\
 j,m,m-j\in {\mathcal{P}_{p}}
\end{smallmatrix}}{\frac{{{(-1)}^{{{p}^{r}}-j}}}{({{p}^{r}}-m){{(p^{r}-j)}^{2}}}}\equiv \sum\limits_{\begin{smallmatrix}
 1\le j<m<{{p}^{r}} \\
 j,m,m-j\in {\mathcal{P}_{p}}
\end{smallmatrix}}{\frac{{{(-1)}^{j}}}{m{{j}^{2}}}} \, (\bmod \, p^r).\]
Hence we have
\begin{equation}\label{mid}
\begin{split}
  2\sum\limits_{\begin{smallmatrix}
 1\le j<m<{{p}^{r}} \\
 j,m,m-j\in {\mathcal{P}_{p}}
\end{smallmatrix}}{\frac{{{(-1)}^{m}}}{j{{m}^{2}}}}& =\sum\limits_{\begin{smallmatrix}
 1\le j,m<{{p}^{r}} \\
 j,m,m-j\in {\mathcal{P}_{p}}
\end{smallmatrix}}{\frac{{{(-1)}^{m}}}{j{{m}^{2}}}} \\
 & =\sum\limits_{\begin{smallmatrix}
 1\le j,m<{{p}^{r}} \\
 j,m\in {\mathcal{P}_{p}}
\end{smallmatrix}}{\frac{{{(-1)}^{m}}}{j{{m}^{2}}}}-\sum\limits_{\begin{smallmatrix}
 1\le j,m<{{p}^{r}} \\
 j\in {\mathcal{P}_{p}},j\equiv m \, (\bmod \, p)
\end{smallmatrix}}{\frac{{{(-1)}^{m}}}{j{{m}^{2}}}} \\
 & =\Big(\sum\limits_{\begin{smallmatrix}
 1\le j<{{p}^{r}} \\
 j\in {\mathcal{P}_{p}}
\end{smallmatrix}}{\frac{1}{j}}\Big)\Big(\sum\limits_{\begin{smallmatrix}
 1\le m<{{p}^{r}} \\
 m\in {\mathcal{P}_{p}}
\end{smallmatrix}}{\frac{{{(-1)}^{m}}}{{{m}^{2}}}}\Big)-\sum\limits_{\begin{smallmatrix}
 1\le j,m<{{p}^{r}} \\
 j\in {\mathcal{P}_{p}},j\equiv m \, (\bmod \, p)
\end{smallmatrix}}{\frac{{{(-1)}^{m}}}{j{{m}^{2}}}} \\
 & \equiv -\sum\limits_{\begin{smallmatrix}
 1\le j,m<{{p}^{r}} \\
 j\in {\mathcal{P}_{p}},j\equiv m \, (\bmod \, p)
\end{smallmatrix}}{\frac{{{(-1)}^{m}}}{j{{m}^{2}}}} \, (\bmod \, {{p}^{r}}),
\end{split}
\end{equation}
here  the last congruence equality follows from the fact
\[\sum\limits_{\begin{smallmatrix}
 1\le j<{{p}^{r}} \\
 j\in {\mathcal{P}_{p}}
\end{smallmatrix}}{\frac{1}{j}}=\frac{1}{2}\sum\limits_{\begin{smallmatrix}
 1\le j<{{p}^{r}} \\
 j\in {\mathcal{P}_{p}}
\end{smallmatrix}}{(\frac{1}{j}+\frac{1}{{{p}^{r}}-j})}=\frac{1}{2}\sum\limits_{\begin{smallmatrix}
 1\le j<{{p}^{r}} \\
 j\in {\mathcal{P}_{p}}
\end{smallmatrix}}{\frac{{{p}^{r}}}{j({{p}^{r}}-j)}}\equiv 0 \, (\bmod \, {{p}^{r}}).\]

 By Lemma \ref{px3} we deduce that
\[\sum\limits_{x=1}^{p-1}{\frac{{{(-1)}^{x}}}{{{x}^{3}}}}=-\sum\limits_{x=1}^{p-1}{\frac{1}{{{x}^{3}}}}+2\sum\limits_{x=1}^{\frac{p-1}{2}}{\frac{1}{{{(2x)}^{3}}}}\equiv -\frac{1}{2}{{B}_{p-3}} \, (\bmod  \, p).\]
Thus by Lemma \ref{Sxp} we have
\begin{equation}\label{end}
\begin{split}
   \sum\limits_{\begin{smallmatrix}
 1\le j,m<{{p}^{r}} \\
 j\in {\mathcal{P}_{p}},j\equiv m \, (\bmod \, p)
\end{smallmatrix}}{\frac{{{(-1)}^{m}}}{j{{m}^{2}}}}&=\sum\limits_{x=1}^{p-1}{\Big(\sum\limits_{\begin{smallmatrix}
 1\le j<{{p}^{r}} \\
 j\equiv x \, (\bmod  \, p)
\end{smallmatrix}}{\frac{1}{j}}\Big)\Big(\sum\limits_{\begin{smallmatrix}
 1\le m<{{p}^{r}} \\
 m\equiv x \, (\bmod \,  p)
\end{smallmatrix}}{\frac{{{(-1)}^{m}}}{{{m}^{2}}}}\Big)} \\
 & ={{{p}^{r-1}}}\sum\limits_{x=1}^{p-1}{\frac{1}{x}\sum\limits_{\begin{smallmatrix}
 1\le m<{{p}^{r}} \\
 m\equiv x \, (\bmod  \, p)
\end{smallmatrix}}{\frac{{{(-1)}^{m}}}{{{m}^{2}}}}} \\
 & = {{{p}^{r-1}}} \sum\limits_{x=1}^{p-1}{\frac{1}{x}\sum\limits_{a=0}^{{{p}^{r-1}}-1}{\frac{{{(-1)}^{x+ap}}}{{{(x+ap)}^{2}}}}} \\
 & \equiv  {{{p}^{r-1}}} \sum\limits_{x=1}^{p-1}{\frac{{{(-1)}^{x}}}{{{x}^{3}}}} \\
 & \equiv -\frac{{{p}^{r-1}}}{2}{{B}_{p-3}} \, (\bmod  \, {{p}^{r}}). \\
\end{split}
\end{equation}

Combining (\ref{start}), (\ref{mid}) and (\ref{end}), we complete the proof of Theorem \ref{thm1}.
\end{proof}

\begin{proof}[Proof of Theorem 2]
Let $n={{p}^{r}}m, $  we only need to show
\begin{equation}\label{dengjia}
\sum\limits_{\begin{smallmatrix}
 i+j+k=m{{p}^{r}} \\
 i,j,k\in {\mathcal{P}_{p}}
\end{smallmatrix}}^{{}}{\frac{(-1)^{i}}{ijk}\equiv \frac{m{{p}^{r-1}}}{2}{{B}_{p-3}}\,(\bmod \, p^{r}).}
\end{equation}
For every triple $(i,j,k)$ of positive integers  which satisfies $i+j+k=m{{p}^{r}},i,j,k\in {\mathcal{P}_{p}}$, we rewrite
\[i=x{{p}^{r}}+{{i}_{0}},j=y{{p}^{r}}+{{j}_{0}},k=z{{p}^{r}}+{{k}_{0}},\]
where $1\le {{i}_{0}},{{j}_{0}},{{k}_{0}}\le {{p}^{r}}-1$ are prime to $p$ and $x, y, z$ are nonnegative integers.

Since  $3\le {{i}_{0}}+{{j}_{0}}+{{k}_{0}}<3{{p}^{r}}$ and ${{i}_{0}}+{{j}_{0}}+{{k}_{0}}=(m-x-y-z){{p}^{r}},$ it's easy to see that
\begin{displaymath}
\left\{ \begin{array}{l}
 {{i}_{0}}+{{j}_{0}}+{{k}_{0}}={{p}^{r}} \\ \nonumber
x+y+z=m-1
\end{array} \right.
\quad \textrm{or} \quad \left\{ \begin{array}{l}
{{i}_{0}}+{{j}_{0}}+{{k}_{0}}=2{{p}^{r}} \\ \nonumber
 x+y+z=m-2
\end{array} \right..
\end{displaymath}
Note that
\begin{displaymath}
\begin{split}
\sum\limits_{\begin{smallmatrix}
 x+y+z=m-1 \\
 x,y,z\ge 0
\end{smallmatrix}}{{{(-1)}^{x}}}&=\sum\limits_{k=0}^{m-1}{{{(-1)}^{k}}(m-k)}=\big[\frac{m+1}{2}\big],\\
\sum\limits_{\begin{smallmatrix}
 x+y+z=m-2 \\
 x,y,z\ge 0
\end{smallmatrix}}{{{(-1)}^{x}}}&=\sum\limits_{k=0}^{m-2}{{{(-1)}^{k}}(m-1-k)}=\big[\frac{m}{2}\big],
\end{split}
\end{displaymath}
here $[x]$ denotes the integer part of $x$. Hence
\begin{equation}\label{thm2mid}
\begin{split}
  \sum\limits_{\begin{smallmatrix}
 i+j+k=m{{p}^{r}} \\
 i,j,k\in {\mathcal{P}_{p}}
\end{smallmatrix}}{\frac{(-1)^{i}}{ijk}}& =\sum\limits_{\begin{smallmatrix}
 {{i}_{0}}+{{j}_{0}}+{{k}_{0}}={{p}^{r}} \\
 x+y+z=m-1 \\
  {{i}_{0}},{{j}_{0}},{{k}_{0}}\in {\mathcal{P}_{p}}
\end{smallmatrix}}{\frac{(-1)^{x{{p}^{r}}+{{i}_{0}}}}{(x{{p}^{r}}+{{i}_{0}})(y{{p}^{r}}+{{j}_{0}})(z{{p}^{r}}+{{k}_{0}})}} \\
& \quad \quad +\sum\limits_{\begin{smallmatrix}
 {{i}_{0}}'+{{j}_{0}}'+{{k}_{0}}'=2{{p}^{r}} \\
 x+y+z=m-2\\
  {{i}_{0}}',{{j}_{0}}',{{k}_{0}}'\in {\mathcal{P}_{p}}\\
   {{i}_{0}}',{{j}_{0}}',{{k}_{0}}' <p^{r}
\end{smallmatrix}}{\frac{(-1)^{x{{p}^{r}}+{{i}_{0}}'}}{(x{{p}^{r}}+{{i}_{0}}')(y{{p}^{r}}+{{j}_{0}}')(z{{p}^{r}}+{{k}_{0}}')}} \\
 & \equiv \sum\limits_{\begin{smallmatrix}
 {{i}_{0}}+{{j}_{0}}+{{k}_{0}}={{p}^{r}} \\
 x+y+z=m-1\\
  {{i}_{0}},{{j}_{0}},{{k}_{0}}\in {\mathcal{P}_{p}}
\end{smallmatrix}}{\frac{(-1)^{x+{{i}_{0}}}}{{{i}_{0}}{{j}_{0}}{{k}_{0}}}}+\sum\limits_{\begin{smallmatrix}
 {{i}_{0}}'+{{j}_{0}}'+{{k}_{0}}'=2{{p}^{r}} \\
 x+y+z=m-2 \\
  {{i}_{0}}',{{j}_{0}}',{{k}_{0}}'\in {\mathcal{P}_{p}}\\
   {{i}_{0}}',{{j}_{0}}',{{k}_{0}}' <p^{r}
\end{smallmatrix}}{\frac{(-1)^{x+{{{i}_{0}}'}}}{{{i}_{0}}'{{j}_{0}}'{{k}_{0}}'}}  \\
& \equiv   \big[\frac{m+1}{2}\big]\sum\limits_{\begin{smallmatrix}
 {{i}_{0}}+{{j}_{0}}+{{k}_{0}}={{p}^{r}} \\
 {{i}_{0}},{{j}_{0}},{{k}_{0}}\in {\mathcal{P}_{p}}
\end{smallmatrix}}{\frac{1}{{{i}_{0}}{{j}_{0}}{{k}_{0}}}}+ \big[\frac{m}{2}\big]\sum\limits_{\begin{smallmatrix}
 {{i}_{0}}'+{{j}_{0}}'+{{k}_{0}}'=2{{p}^{r}} \\
 {{i}_{0}}',{{j}_{0}}',{{k}_{0}}'\in {\mathcal{P}_{p}} \\
 {{i}_{0}}',{{j}_{0}}',{{k}_{0}}' <p^{r}
\end{smallmatrix}}{\frac{(-1)^{{{i}_{0}}'}}{{{i}_{0}}'{{j}_{0}}'{{k}_{0}}'}} \,(\bmod \, p^r).
\end{split}
\end{equation}

For the second sum in (\ref{thm2mid}), since $({{i}_{0}},{{j}_{0}},{{k}_{0}})\leftrightarrow ({{p}^{r}}-{{i}_{0}},{{p}^{r}}-{{j}_{0}},{{p}^{r}}-{{k}_{0}})$ gives a bijection between the solutions of  ${{i}_{0}}+{{j}_{0}}+{{k}_{0}}={{p}^{r}}$  and ${{i}_{0}}'+{{j}_{0}}'+{{k}_{0}}'=2{{p}^{r}}$ (here $i_{0}',j_{0}',k_{0}'<p^r$), we have
\begin{equation}\label{2ndsum}
\begin{split}
\sum\limits_{\begin{smallmatrix}
 {{i}_{0}}'+{{j}_{0}}'+{{k}_{0}}'=2{{p}^{r}} \\
 {{i}_{0}}',{{j}_{0}}',{{k}_{0}}'\in {\mathcal{P}_{p}}\\
  {{i}_{0}}',{{j}_{0}}',{{k}_{0}}' <p^{r}
\end{smallmatrix}}{\frac{(-1)^{{{i}_{0}}'}}{{{i}_{0}}'{{j}_{0}}'{{k}_{0}}'}} & =\sum\limits_{\begin{smallmatrix}
 {{i}_{0}}+{{j}_{0}}+{{k}_{0}}={{p}^{r}} \\
 {{i}_{0}},{{j}_{0}},{{k}_{0}}\in {\mathcal{P}_{p}}
\end{smallmatrix}}{\frac{(-1)^{p^{r}-{{i}_{0}}}}{({{p}^{r}}-{{i}_{0}})({{p}^{r}}-{{j}_{0}})({{p}^{r}}-{{k}_{0}})}}\\
&\equiv  \sum\limits_{\begin{smallmatrix}
 {{i}_{0}}+{{j}_{0}}+{{k}_{0}}={{p}^{r}} \\
 {{i}_{0}},{{j}_{0}},{{k}_{0}}\in {\mathcal{P}_{p}}
\end{smallmatrix}}{\frac{(-1)^{i_{0}}}{{{i}_{0}}{{j}_{0}}{{k}_{0}}}}\,(\bmod \, {{p}^{r}}).
\end{split}
\end{equation}

Combining  (\ref{thm2mid}) with (\ref{2ndsum}) and apply Theorem \ref{thm1}, we deduce
\begin{equation}\nonumber
\begin{split}
\sum\limits_{\begin{smallmatrix}
 i+j+k=m{{p}^{r}} \\
 i,j,k\in {\mathcal{P}_{p}}
\end{smallmatrix}}^{{}}{\frac{(-1)^i}{ijk}}
& = \Big(\big[\frac{m+1}{2}\big]+\big[\frac{m}{ 2}\big]\Big)\sum\limits_{\begin{smallmatrix}
 {{i}_{0}}+{{j}_{0}}+{{k}_{0}}={{p}^{r}} \\
 {{i}_{0}},{{j}_{0}},{{k}_{0}}\in {\mathcal{P}_{p}}
\end{smallmatrix}} {\frac{(-1)^{i_{0}}}{{{i}_{0}}{{j}_{0}}{{k}_{0}}}}\\
& \equiv \frac{m{{p}^{r-1}}}{2}{{B}_{p-3}}\,(\bmod \, {{p}^{r}}),
\end{split}
\end{equation}
hence (\ref{dengjia}) is true.
\end{proof}

\section{Concluding Remarks}
We leave two open questions for future research.

\textbf{Question 1.}  Can we find an arithmetical function $f(n)$ such that
\[\sum\limits_{\begin{smallmatrix}
 i+j+k=n \\
 i,j,k\in {\mathcal{P}_{n}}
\end{smallmatrix}}{\frac{{(-1)}^{i}}{ijk}}\equiv f(n)\,(\bmod \,n).\]

\textbf{Question 2.} For $n \ge 4$, can we find some analogous result for the following sum
\[\sum\limits_{\begin{smallmatrix}
 {{i}_{1}}+{{i}_{2}}+\cdots +{{i}_{n}}={{p}^{r}} \\
 {{i}_{1}},{{i}_{2}},\cdots ,{{i}_{n}}\in {\mathcal{P}_{p}}
\end{smallmatrix}}{\frac{{{(-1)}^{{{i}_{1}}}}}{{{i}_{1}}{{i}_{2}}\cdots {{i}_{n}}}}\]
modulo $p^{r}$?


\end{document}